\newcommand{\swoosh}{\includegraphics[width=0.35in]{swoosh.pdf}}
\renewcommand{\tilde}{\widetilde}
\newcommand{\R}{\mathbb{R}}
\newcommand{\N}{\mathbb{N}}
\newcommand{\T}{\mathbb{T}}
\newcommand{\Z}{\mathbb{Z}}
\newcommand{\Hil}{\mathcal{H}}
\DeclareMathOperator{\lspan}{span}
\providecommand{\norm}[1]{\lVert#1\rVert}
\newcounter{Theorem}
\numberwithin{equation}{section}
\numberwithin{Theorem}{section}
\theoremstyle{plain} 
\newtheorem{thm}[Theorem]{Theorem}
\newtheorem{cor}[Theorem]{Corollary}
\newtheorem{lem}[Theorem]{Lemma}
\theoremstyle{definition}
\newtheorem{defn}[Theorem]{Definition}
\theoremstyle{remark}
\newtheorem{ex}[Theorem]{Example}
\newtheorem{problem}{Problem}
\begin{document}
\title{Constructive proof of the Carpenter's Theorem}

\author{Marcin Bownik}
\author{John Jasper}

\address{Department of Mathematics, University of Oregon, Eugene, OR 97403--1222, USA}

\email{mbownik@uoregon.edu}

\address{Department of Mathematics, University of Missouri, Columbia, MO 65211--4100, USA}

\email{jasperj@missouri.edu}

\keywords{diagonals of projections, the Schur-Horn theorem, the Pythagorean theorem, the Carpenter theorem, spectral theory}

\keywords{diagonals of self-adjoint operators, the Schur-Horn theorem, the Pythagorean theorem, the Carpenter theorem, spectral theory}

\thanks{
This work was partially supported by a grant from the Simons Foundation (\#244422 to Marcin Bownik).
The second author was supported by NSF ATD 1042701}

\subjclass[2000]{Primary: 42C15, 47B15, Secondary: 46C05}
\date{\today}

\begin{abstract}
We give a constructive proof of Carpenter's Theorem due to Kadison \cite{k1,k2}. Unlike the original proof our approach also yields the real case of this theorem.
\end{abstract}

\maketitle

\section{Kadison's theorem}

In \cite{k1} and \cite{k2} Kadison gave a complete characterization of the diagonals of orthogonal projections on a Hilbert space $\mathcal H$. 

\begin{thm}[Kadison]\label{Kadison} Let $\{d_{i}\}_{i\in I}$ be a sequence in $[0,1]$. Define
\[a=\sum_{d_{i}<1/2}d_{i} \quad\text{and}\quad b=\sum_{d_{i}\geq 1/2}(1-d_{i}).\]
There exists a projection $P$ with diagonal $\{d_{i}\}$ if and only if one of the following holds
\begin{enumerate}
\item $a,b<\infty$ and  $a-b\in\Z$,
\item $a=\infty$ or $b=\infty$.
\end{enumerate}
\end{thm}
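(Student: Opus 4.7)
The plan is to handle necessity and sufficiency separately, with the main effort going into the constructive sufficiency argument (the \emph{Carpenter} half of the statement).

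For necessity, I would introduce the diagonal projection $Q$ onto $\overline{\lspan}\{e_i : d_i\geq 1/2\}$. A direct computation shows that the diagonal of $(P-Q)^2$ sums to $a+b$, so $a,b<\infty$ forces $P-Q$ to be Hilbert--Schmidt, and its formal diagonal trace then equals $a-b$. The conclusion $a-b\in\Z$ follows from the classical fact that the non-$\pm 1$ part of the spectrum of the difference of two projections comes in cancelling $\pm\lambda$ pairs, so the only surviving trace contributions are $\dim(\ran P\cap\ker Q)-\dim(\ker P\cap\ran Q)$, an integer. The case where $a=\infty$ or $b=\infty$ is vacuous on the necessity side.

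For sufficiency the fundamental building block is a finite-dimensional Schur--Horn lemma for projections: any finite sequence $(d_1,\ldots,d_n)\in[0,1]^n$ with $\sum d_i=k\in\{0,1,\ldots,n\}$ is realized as the diagonal of a rank-$k$ projection, and crucially this construction can be carried out over $\R$ by an explicit sequence of plane rotations, which is what lets the proof also cover the real case. Case (ii), say $a=\infty$, is then handled by partitioning $I$ into finite blocks whose $d$-sums are exactly integers: the divergence of $a$ provides enough small values to fine-tune each block so that when a partial sum first crosses an integer one can adjoin an additional index (possibly a large one, paired with some small residue) to land on that integer exactly. Finite Schur--Horn on every block and an orthogonal direct sum produce the global projection $P$.

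The main obstacle is case (i), where $a$ and $b$ are finite and the integer condition $a-b\in\Z$ must be exploited in a single coordinated step. My plan is to select a finite set $F\subset I$ that contains every index with $d_i\in[\eps,1-\eps]$ (finitely many by summability) together with enough tail indices so that $\sum_{i\in F}d_i\in\Z$; the integrality of $a-b$ is exactly what allows such an $F$ to be assembled by a bookkeeping argument, since the tails on either side contribute finite, rationally independent-looking sums but whose difference is controlled. Finite Schur--Horn then handles $F$. The residual indices $I\setminus F$ still carry, in general, infinitely many values clustering near $0$ and $1$; these must be grouped into further finite blocks of exact integer $d$-sum by pairing a small value with a large value and, when the two do not cooperate, reshuffling a small correction back into $F$. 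Arranging this partition so that every block has an \emph{exact} integer sum, using only real operations rather than invoking spectral or measure-theoretic compactness as in Kadison's original argument, is the delicate step where I expect the genuine difficulty to lie.
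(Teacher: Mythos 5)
Your proposal tackles both directions, but the paper only proves the sufficiency half (Carpenter's Theorem); for necessity it cites Kadison and Arveson. Your sketch of necessity via the Hilbert--Schmidt difference $P-Q$ and the $\pm\lambda$ pairing of its spectrum is in the spirit of the essential-codimension/Fredholm-index argument and is plausible, but it is not what the paper does, so I will focus on sufficiency, which is where there is a genuine gap.

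The central difficulty you correctly identify --- ``arranging this partition so that every block has an \emph{exact} integer sum'' --- is not merely delicate; it is impossible as stated. If, say, all $d_i$ are distinct algebraically independent irrationals in $(0,1/2)$ with $\sum d_i = \infty$, then no finite subset of $I$ has $d$-sum exactly an integer, and no amount of ``adjoining an additional index,'' ``pairing a small value with a large value,'' or ``reshuffling a small correction back into $F$'' can change that. The same obstruction sinks your plan for case (i). The paper gets around this in two quite different ways, and you have neither. For case (i) it uses a ``moving toward $0$--$1$'' lemma (Lemma~\ref{ops}): it \emph{perturbs} finitely many diagonal entries $\{d_i\}\mapsto\{\tilde d_i\}$ so that a designated finite block has $\tilde d$-sum exactly $1$ and the complementary block has $(1-\tilde d)$-sum in $\N$; it then solves both blocks by the summable Carpenter theorem (Theorem~\ref{fcpt} and its corollary), and finally conjugates by an explicit unitary (a product of plane rotations) to recover a projection with the \emph{original} diagonal $\{d_i\}$. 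For case (ii) it does not partition into integer-sum blocks at all: Theorem~\ref{cptalg} builds an explicit infinite matrix whose rows $v_n$ are orthonormal and whose $i$th column has norm $\sqrt{d_{\pi(i)}}$, by \emph{splitting} the two boundary columns of each unit-mass window between consecutive rows using the $2\times 2$ balancing of Lemma~\ref{translem} (the spectral-tetris mechanism); no finite sub-block is required to have an integer sum. Unless you incorporate one of these two ideas --- perturbing the diagonal and undoing it by conjugation, or sharing weight across block boundaries via extra coordinates --- your construction will break on generic inputs.

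Your finite-dimensional Schur--Horn lemma over $\R$ via plane rotations, on the other hand, matches the paper's Theorem~\ref{Horn finite rank} and Lemma~\ref{Horn rank 1}, and is exactly the reason the paper's argument covers the real case; that building block of your outline is sound.
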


The goal of this paper is to give a constructive proof of the sufficiency direction of Kadison's theorem. Kadison \cite{k1,k2} referred to the necessity part of Theorem \ref{Kadison} as the Pythagorean Theorem and the sufficiency as Carpenter's Theorem. Arveson \cite{a} gave a necessary condition on the diagonals of a certain class of normal operators with finite spectrum. When specialized to the case of two point spectrum Arveson's theorem yields the Pythagorean Theorem, i.e., the necessity of (i) or (ii) in Theorem \ref{Kadison}. Whereas Kadison's original proof is a beautiful direct argument, Arveson's proof uses the Fredholm Index Theory.

In contrast, until very recently there were no proofs of Carpenter's Theorem other than the original one by Kadison, although its extension for $\rm{II}_1$ factors was studied by Argerami and Massey \cite{am}. A notable exception is a recent paper by Argerami \cite{ar} about which we became aware only after completing this work.  In this paper we give an alternative proof of Carpenter's Theorem which has two main advantages over the original. First, the original proof does not yield the real case, which ours does. Second, our proof is constructive in the sense that it gives a concrete algorithmic process for finding the desired projection. This is distinct from Kadison's original proof, which is mostly existential.

The paper is organized as follows. In Section 2 we state preliminary results such as finite rank Horn's theorem. These results are then used in Section 3 to show the sufficiency of (i) in Theorem \ref{Kadison}. The key role in the proof is played by a lemma from \cite{mbjj} which enables modifications of diagonal sequences into more favorable configurations. Section 4 contains the proof of sufficiency of (ii) in Theorem \ref{Kadison}. To this end we introduce an algorithmic procedure for constructing a projection with prescribed diagonal which is reminiscent of the spectral tetris construction introduced by Casazza et al. \cite{cfmwz} in their study of tight fusion frames. Finally, in Section 5 we formulate an open problem of characterizing spectral functions of shift-invariant spaces in $L^2(\mathbb R^d)$, introduced by the first author and Rzeszotnik in \cite{br}, which was a motivating force behind this paper.

\section{Preliminary results}

The main goal of this section is to give a constructive proof of  Horn's Theorem \cite[Theorem 9.B.2]{moa}, which is the sufficiency part of the Schur-Horn Theorem \cite{horn, schur}. We present this proof both for the sake of self-sufficiency of part (i) of Carpenter's Theorem and also to cover the more general case of finite rank operators on an infinite dimensional Hilbert space, see also \cite{ak, kw0, kw}. Moreover, we also give an argument reducing Theorem \ref{Kadison} to the countable case.

\begin{thm}[Horn's Theorem]\label{Horn finite rank} Let $\{\lambda_{i}\}_{i=1}^{N}$ be a positive nonincreasing sequence, and let $\{d_{i}\}_{i=1}^{M}$ be a nonnegative nonincreasing sequence, where $M\in\N \cup\{\infty\}$ and $M\ge N$. If 
\begin{equation}\label{finite rank majorization}\begin{split}
\sum_{i=1}^{n}d_{i} \leq \sum_{i=1}^{n}\lambda_{i} & \quad \text{for all }n\leq N,\\
\sum_{i=1}^{M}d_{i} = \sum_{i=1}^{N}\lambda_{i}, & \\
\end{split}\end{equation}
then there is a positive rank $N$ operator $S$ on a real $M$-dimensional Hilbert space $\Hil$ with positive eigenvalues $\{\lambda_{i}\}_{i=1}^{N}$ and diagonal $\{d_{i}\}_{i=1}^{M}$.
\end{thm}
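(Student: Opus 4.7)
My plan is to prove Horn's theorem constructively using only real plane rotations ($T$-transforms), so that the argument works in the real category. I would proceed by induction on $M$, with the classical finite-dimensional Schur--Horn theorem ($M = N$) as base case and $M > N$ handled by reduction to $M - 1$; for $M = \infty$ I would argue separately.

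For the base case $M = N$, the hypothesis is the classical majorization $\{d_i\} \prec \{\lambda_i\}$. Starting from $\Lambda := \text{diag}(\lambda_1, \ldots, \lambda_N)$, I would repeatedly apply plane rotations: find the smallest index $j$ where the current diagonal differs from $d_j$, locate a compensating index $\ell$ (whose existence follows from the majorization and the total-sum equality), and apply a plane rotation in the $(j, \ell)$-block to fix position $j$ to its target value $d_j$ while preserving the spectrum. Each step permanently fixes one coordinate, so the procedure terminates in at most $N - 1$ rotations.

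For the inductive step $M > N$, start from $\Lambda = \text{diag}(\lambda_1, \ldots, \lambda_N, 0, \ldots, 0) \in \Hil$ and match the smallest diagonal entry $d_M$ first. If $d_M = 0$, compress to $e_M^\perp$ and apply the inductive hypothesis directly. Otherwise pick the largest index $k$ with $\lambda_k \geq d_M$ (which exists, since otherwise $\sum_i \lambda_i < N d_M \leq M d_M \leq \sum_i d_i$, contradicting the total-sum condition), and apply a plane rotation in coordinates $(k, M)$ that sets the new $(M,M)$-entry equal to $d_M$. The submatrix on $e_M^\perp$ is then again diagonal, with spectrum $\{\lambda_1, \ldots, \lambda_{k-1}, \lambda_k - d_M, \lambda_{k+1}, \ldots, \lambda_N\}$ (plus zeros) and target diagonal $\{d_i\}_{i=1}^{M-1}$; a direct check shows the reduced majorization persists, so the induction hypothesis applies.

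The main obstacles I foresee are (a) verifying that the reduced majorization hypothesis persists after each inductive step, a standard but bookkeeping-heavy computation that is the real work of the argument, and (b) handling $M = \infty$. For the latter I expect either a reduction to the countable finite-rank case via approximation by finite truncations, combined with the rank-one $N = 1$ construction (where one directly sets $v_i = \sqrt{d_i/\lambda_1}$, noting $\ell^2$-summability is automatic from $\sum d_i = \lambda_1 < \infty$) to absorb the tail, or else a careful argument showing the algorithmic procedure extends directly to separable $\Hil$ with rotations localized to finitely many coordinates at each stage.
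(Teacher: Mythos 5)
Your induction is organized differently from the paper's: you induct on the dimension $M$, peeling off the smallest target diagonal entry $d_M$ with a single plane rotation at each step, whereas the paper inducts on the rank $N$, splitting the index set $\{1,\dots,M\}$ at a carefully chosen cut point $m_0$ (the largest $m$ with $\sum_{i=m}^M d_i\ge \lambda_N$), solving rank $N-1$ on the head and rank $1$ on the tail (via Lemma~\ref{Horn rank 1}), and then repairing the two boundary diagonal entries with one rotation. Both use only real orthogonal moves, so both yield the real case; the paper's rank induction has the advantage that the chosen cut point $m_0$ exists for \emph{any} $M\le\infty$ and the tail is finished off in one shot by the rank-one lemma, handling $M=\infty$ with no extra work. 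For what it is worth, the "direct check" you flagged as obstacle (a) does go through: writing $\mu$ for the re-sorted sequence obtained by replacing $\lambda_k$ with $\lambda_k-d_M$, the key estimate is, for $k\le n<N$,
\[
\sum_{i=n+1}^{N}\lambda_i \;<\; (N-n)\,d_M \;\le\; (M-1-n)\,d_M \;\le\; \sum_{i=n+1}^{M-1}d_i,
\]
the first inequality using $\lambda_i<d_M$ for $i>k$, the second using $M\ge N+1$, the third using $d_i\ge d_M$; this rearranges to $\sum_{i=1}^{n}d_i\le\sum_{i=1}^{n}\lambda_i-d_M$, which (together with the monotonicity of $\mu$) implies $\sum_{i=1}^{n}d_i\le\sum_{i=1}^{n}\mu_i$ for all $n$.

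The genuine gap is your treatment of $M=\infty$. Your recursion begins by matching the \emph{smallest} diagonal entry $d_M$, and when $M=\infty$ there is no such entry: a nonnegative nonincreasing summable sequence has no minimum. Neither of your two fallback suggestions closes this. "Approximation by finite truncations" needs some compactness or limiting mechanism to pass from finite-$M$ solutions to an actual operator on an infinite-dimensional space, and since the truncated problems live on growing index sets there is no obvious stable sequence of unitaries to extract a limit from. "Extending the algorithm to separable $\Hil$ with rotations localized to finitely many coordinates" fails at step one for the same reason, namely there is no first diagonal target to aim at. Patching this seems to force you towards something closer to the paper's argument: switch to an induction on rank, use the rank-one lemma (your $v_i=\sqrt{d_i/\lambda_1}$ construction, which you do state) to absorb an infinite tail $\{d_i\}_{i\ge m_0}$ chosen so its sum equals $\lambda_N$, and recurse on the finite head with rank $N-1$. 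That modification is essentially the paper's proof, so at present you have a correct alternative for finite $M$ but not a complete proof of the stated theorem.
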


We need a basic lemma.

\begin{lem}\label{Horn rank 1} Let $M\in\N\cup\{\infty\}$. If $\{d_{i}\}_{i=1}^{M}$ is a nonzero nonnegative sequence with
\[\sum_{i=1}^{M}d_{i}=\lambda<\infty,\]
then there is a positive rank $1$ operator $S$ on an $M$-dimensional Hilbert space $\mathcal H$ with eigenvalue $\lambda$ and diagonal $\{d_{i}\}$.\end{lem}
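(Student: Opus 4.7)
The plan is to exhibit $S$ explicitly as a rank $1$ projection-like operator built from a single unit vector whose coordinates are the square roots (scaled) of the prescribed diagonal entries. Fix an orthonormal basis $\{e_i\}_{i=1}^M$ of $\Hil$. Any positive rank $1$ operator with eigenvalue $\lambda$ has the form $S = \lambda \,v \otimes v$, i.e.\ $Sx = \lambda \langle x, v\rangle v$, for some unit vector $v \in \Hil$. The diagonal entries in the basis $\{e_i\}$ are then $\langle Se_i, e_i\rangle = \lambda |\langle v, e_i\rangle|^2$, so matching the diagonal $\{d_i\}$ forces $|\langle v, e_i\rangle|^2 = d_i/\lambda$.

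Accordingly, I would define
\[
v = \sum_{i=1}^M \sqrt{d_i/\lambda}\, e_i \in \Hil,
\]
and set $S = \lambda \,v \otimes v$. The hypothesis $\sum_i d_i = \lambda < \infty$ guarantees $\sum_i d_i/\lambda = 1$, so $v$ is a well-defined unit vector in $\Hil$ (convergent in norm when $M = \infty$), making $S$ a bona fide bounded positive rank $1$ operator. A direct computation then gives $\langle Se_i, e_i\rangle = \lambda (d_i/\lambda) = d_i$ for every $i$, and the single nonzero eigenvalue of $S$ is $\lambda$ with eigenvector $v$.

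There is essentially no obstacle here; the lemma is just the observation that matching the diagonal of a positive rank $1$ operator to a prescribed $\ell^1$ summable nonnegative sequence reduces to choosing coordinates for a unit vector. The only mild subtlety is the case $M = \infty$, where one should note that the series defining $v$ converges in $\Hil$ by Parseval, and that we are working over the real field so all the square roots $\sqrt{d_i/\lambda}$ give legitimate real coordinates — which is important for the later use of this lemma in proving the real case of the Carpenter theorem.
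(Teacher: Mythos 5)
Your construction is identical to the paper's up to a trivial renormalization: the paper takes $v = \sum_i \sqrt{d_i}\,e_i$ (so $\|v\|^2 = \lambda$) and sets $Sf = \langle f, v\rangle v$, whereas you use the unit vector $v/\sqrt{\lambda}$ and multiply by $\lambda$, which yields the same operator. The argument is correct and matches the paper's proof.
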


\begin{proof} Let $\{e_{i}\}_{i=1}^{M}$ be an orthonormal basis for the Hilbert space $\Hil$. Set
\[v = \sum_{i=1}^{M}\sqrt{d_{i}}e_{i},\]
and define $S:\Hil\to\Hil$ by $Sf = \langle f,v\rangle v$ for each $f\in\Hil$. Clearly $S$ is rank $1$, and since $\norm{v}^{2}=\lambda$ the vector $v$ is an eigenvector with eigenvalue $\lambda$. Finally, it is simple to check that $S$ has the desired diagonal.\end{proof}

\begin{proof}[Proof of Theorem \ref{Horn finite rank}] The proof proceeds by induction on $N$. The base case $N=1$ follows from Lemma \ref{Horn rank 1}. Suppose that Theorem \ref{Horn finite rank} holds for ranks up to $N-1$.
Define
\[m_{0}=\max\bigg\{m :\sum_{i=m}^{M}d_{i}\geq \lambda_{N}\bigg\}\]
and
\begin{equation}\label{eta}\eta=\bigg(\sum_{i=m_{0}}^{M}d_{i}\bigg)-\lambda_{N} = \sum_{i=1}^{N-1}\lambda_{i} - \sum_{i=1}^{m_{0}-1}d_{i}.\end{equation}
Note that the maximality of $m_{0}$ implies that $m_{0}\geq N$. For each $n\leq N$ define
\[\delta_{n} = \sum_{i=1}^{n}(\lambda_{i}-d_{i}) \ge 0.\]
For a certain value $0 \le \Delta\leq\eta$, which will be specified later, define the sequence
\begin{equation}\label{td}\tilde{d}_{i} = \begin{cases} d_{1}+\Delta & i=1\\ d_{m_{0}}-\Delta & i=m_{0}\\ d_{i} & i\neq 1,m_{0}.\end{cases}\end{equation}
From the maximality of $m_{0}$ we have
\[\tilde{d}_{m_{0}} = d_{m_{0}}-\Delta\geq d_{m_{0}}-\eta = \lambda_{N}-\sum_{i=m_{0}+1}^{M}d_{i}>0.\]
This shows that $\{\tilde{d}_{i}\}$ is a nonnegative sequence. However, note that this sequence might might fail to be nonincreasing at the position $i=m_0$, which requires extra care in our considerations.

Our next goal is to construct an operator $\tilde{S}$ with positive eigenvalues $\{\lambda_{i}\}_{i=1}^{N}$, diagonal $\{\tilde{d}_{i}\}_{i=1}^{M}$ with respect to the orthonormal basis $\{e_{i}\}_{i=1}^{M}$, and the property that $\langle\tilde{S}e_{1},e_{m_{0}}\rangle = 0$.
The argument splits into two cases.

\textbf{Case 1:} Assume there exists $n\leq \min\{N,m_{0}-1\}$ such that $\delta_{n}<\eta$. Fix $n_{0}\leq \min\{N,m_{0}-1\}$ such that $\delta_{n_{0}}\leq \delta_{n}$ for all $n\leq \min\{N,m_{0}-1\}$. Define $\{\tilde{d}_{i}\}$ as in \eqref{td} with $\Delta = \delta_{n_{0}}$.

Note that
\begin{equation}\label{horn1}\sum_{i=n_{0}+1}^{M}\tilde{d}_{i} = -\delta_{n_{0}} + \sum_{i=n_{0}+1}^{M}d_{i} = \sum_{i=1}^{M}d_{i} - \sum_{i=1}^{n_{0}}\lambda_{i} = \sum_{i=1}^{N}\lambda_{i} - \sum_{i=1}^{n_{0}}\lambda_{i} = \sum_{i=n_{0}+1}^{N}\lambda_{i}.\end{equation}
Since $m_{0}>n_{0}$ and $\tilde{d}_{m_{0}}>0$, from \eqref{horn1} we see that $n_{0}<N$.

For $n\leq n_{0}$
\[\sum_{i=1}^{n}\tilde{d}_{i} = \delta_{n_{0}} + \sum_{i=1}^{n}d_{i} \leq \delta_{n}+\sum_{i=1}^{n}d_{i} = \sum_{i=1}^{n}\lambda_{i}\]
with equality when $n=n_{0}$. Since $n_{0}<N$, by the inductive hypothesis there is a positive rank $n_{0}$ operator $\tilde{S}_{1}$ with eigenvalues $\{\lambda_{i}\}_{i=1}^{n_{0}}$ and diagonal $\{\tilde{d}_{i}\}_{i=1}^{n_{0}}$ with respect to the basis $\{e_{i}\}_{i=1}^{n_{0}}$.

Observe that the subsequence $\{\tilde d_i\}_{i=n_0+1}^{N-1}$  coincides with $\{ d_i\}_{i=n_0+1}^{N-1}$ since $N-1<m_0$. Thus, for any $n_0+1\le n  \le N-1$ we have
\[\sum_{i=n_{0}+1}^{n} \tilde d_{i} = \sum_{i=n_{0}+1}^{n}d_{i} \leq \delta_{n}-\delta_{n_{0}} + \sum_{i=n_{0}+1}^{n}d_{i} = \sum_{i=n_{0}+1}^{n}\lambda_{i}.\]
Moreover, by \eqref{horn1} we have
\[
\sum_{i=n_{0}+1}^{N}\tilde d_{i} \leq \sum_{i=n_{0}+1}^{M}\tilde{d}_{i} = \sum_{i=n_{0}+1}^{N}\lambda_{i}.\]
Thus, $\{\lambda_{i}\}_{i=n_{0}+1}^{N}$ and the nonincreasing rearrangement of $\{\tilde d_i\}_{i=n_0+1}^M$ satisfy the inductive hypothesis \eqref{finite rank majorization}. That is, there is a positive rank $N-n_{0}$ operator $\tilde{S}_{2}$ with  eigenvalues $\{\lambda_{i}\}_{i=n_{0}+1}^{N}$ and diagonal $\{\tilde{d}_{i}\}_{i=n_{0}+1}^{M}$ with respect to the basis $\{e_{i}\}_{i=n_{0}+1}^M$. Thus, the operator $\tilde{S} = \tilde{S}_{1}\oplus\tilde{S}_{2}$ has the desired properties. Indeed, the property that $\langle\tilde{S}e_{1},e_{m_{0}}\rangle = 0$ follows immediately from the definition of $\tilde S$ and the fact that $n_0<m_0$.

\textbf{Case 2:} Assume $\eta\leq \delta_{n}$ for all $n\leq \min\{N,m_{0}-1\}$. Define $\{\tilde{d}_{i}\}$ as in \eqref{td} with $\Delta = \eta$. For $n\leq N-1$ we have
\[\sum_{i=1}^{n}\tilde{d}_{i} = \eta+\sum_{i=1}^{n}d_{i}\leq \delta_{n}+\sum_{i=1}^{n}d_{i} = \sum_{i=1}^{n}\lambda_{i}.\]
We also have by \eqref{eta}
\[\sum_{i=1}^{m_{0}-1}\tilde{d}_{i} = \eta + \sum_{i=1}^{m_{0}-1}d_{i} = \sum_{i=1}^{N-1}\lambda_{i}.\]
By the inductive hypothesis there is a positive rank $N-1$ operator $\tilde{S}_{1}$ with diagonal $\{\tilde{d}_{i}\}_{i=1}^{m_{0}-1}$ and positive eigenvalues $\{\lambda_{i}\}_{i=1}^{N-1}$.
Using the equality in \eqref{finite rank majorization} we have
\[\sum_{i=m_{0}}^{M}\tilde{d}_{i} = -\eta + \sum_{i=m_{0}}^{M}d_{i} = \sum_{i=1}^{M}d_{i} - \sum_{i=1}^{N-1}\lambda_{i} = \lambda_{N}.\]
By Lemma \ref{Horn rank 1} there is a positive rank $1$ operator $\tilde{S}_{2}$ with diagonal $\{\tilde{d}_{i}\}_{i=m_{0}}^{M}$ and eigenvalue $\lambda_{N}$. Thus, the operator $\tilde{S} = \tilde{S}_{1}\oplus\tilde{S}_{2}$ has the desired properties.

Combining the above two cases shows that the desired operator $\tilde{S}$ exists. Let $\alpha\in[0,1]$ be such that $\alpha(d_{1}+\Delta) + (1-\alpha)(d_{m_{0}}-\Delta)=d_{1}$. Define the unitary operator $U$ on the orthonormal basis $\{e_{i}\}_{i=1}^{M}$ by
\begin{equation*}
U(e_{i})=\begin{cases}
\sqrt{\alpha}e_{1} - \sqrt{1-\alpha}e_{m_{0}} & i=1,\\
\sqrt{1-\alpha}e_{1} + \sqrt{\alpha}e_{m_{0}} & i=m_{0},\\
e_{i} & \text{otherwise}.
\end{cases}
\end{equation*}
A simple calculation shows that $S=U^{\ast}\tilde{S}U$ has diagonal $\{d_{i}\}_{i=1}^{M}$ in the basis $\{e_{i}\}_{i=1}^{M}$. This completes the proof of Theorem \ref{Horn finite rank}.

\end{proof}

The following ``moving toward $0$-$1$'' lemma first appeared in \cite{mbjj}. Its proof is constructive as it consists a finite number of ``convex moves'' as at the end of the previous proof. Moreover, from the proof in \cite{mbjj} it follows that Lemma \ref{ops} holds for real Hilbert spaces as well as complex.

\begin{lem}\label{ops}
 Let $\{d_{i}\}_{i\in I}$ be a sequence in $[0,1]$. Let $I_0, I_1 \subset I$ be two disjoint finite subsets such that $\max\{d_{i}: i \in I_0\}\leq\min\{d_{i}: i \in I_1\}$. Let $\eta_{0}\geq 0$ and
\[
\eta_{0}\leq\min\bigg\{\sum_{i\in I_0} d_{i},\sum_{i\in I_1}
(1-d_{i}) \bigg\}.\]
(i) There exists a sequence $\{\tilde d_{i}\}_{i\in I}$  in $[0,1]$
satisfying
\begin{align}
\label{ops0}
\tilde d_i = d_i &\quad\text{for } i \in I \setminus (I_0 \cup I_1),
\\
\label{ops1}
\tilde d_i \leq d_i \quad i\in I_0,
&\quad\text{and}\quad
\tilde d_i \ge d_i, \quad i\in I_1,
\\
\label{ops2}
\eta_0+\sum_{i\in I_0}\tilde{d}_{i} =\sum_{i\in I_0}d_{i}
&\quad\text{and}\quad 
\eta_0+\sum_{i\in I_1} (1-\tilde{d}_{i})=\sum_{i\in I_1} (1-d_{i}).
\end{align}
(ii) For any self-adjoint operator $\tilde E$ on $\Hil$ with diagonal $\{\tilde d_{i}\}_{i\in I}$,
there exists an operator $E$ on $\Hil$ unitarily equivalent to $\tilde E$ with diagonal $\{d_{i}\}_{i\in I}$.
\end{lem}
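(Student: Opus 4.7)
The plan is as follows. For part (i), the construction is a direct ``mass transfer.'' Since $\eta_0 \leq \sum_{i\in I_0} d_i$, I order the elements of $I_0$ and greedily decrease their $d_i$ values (each down to $0$ if needed) until the total decrease equals exactly $\eta_0$; this produces $\tilde d_i \in [0,d_i]$ for $i\in I_0$ satisfying the first equation of \eqref{ops2}. Symmetrically, since $\eta_0 \leq \sum_{i\in I_1}(1-d_i)$, I greedily increase the $d_i$ on $I_1$ (each up to $1$ if needed) by total amount $\eta_0$; this yields $\tilde d_i\in [d_i,1]$ for $i\in I_1$ satisfying the second equation. Setting $\tilde d_i = d_i$ for $i\notin I_0\cup I_1$ completes the definition, and \eqref{ops0}--\eqref{ops2} hold by construction.

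For part (ii), the plan is to realize the required unitary as a finite product of ``$2\times 2$ rotations.'' The basic move relies on the following standard fact about self-adjoint $2\times 2$ blocks: if the block at positions $(i_0,i_1)$ has the form $\bigl(\begin{smallmatrix} a & c \\ \bar c & b\end{smallmatrix}\bigr)$, its eigenvalues $\mu_1\le\mu_2$ satisfy $\mu_1\le\min(a,b)\le\max(a,b)\le\mu_2$, and for any $a',b'$ with $a'+b'=a+b$ and $a',b'\in[\mu_1,\mu_2]$ there exists a unitary rotation in $\lspan\{e_{i_0},e_{i_1}\}$ whose conjugation replaces the block diagonal by $(a',b')$ while leaving all other diagonal entries unchanged. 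In the real case the rotation can be chosen real orthogonal, which is precisely why the lemma holds in the real Hilbert space setting.

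Starting from $\tilde E$, I pick any $i_0\in I_0$ whose current diagonal entry is strictly less than $d_{i_0}$ and any $i_1\in I_1$ whose current diagonal entry is strictly greater than $d_{i_1}$. By \eqref{ops1} and the hypothesis $\max\{d_i:i\in I_0\}\le\min\{d_i:i\in I_1\}$, the current entries satisfy $\tilde d_{i_0}\le d_{i_0}\le d_{i_1}\le \tilde d_{i_1}$. Setting $\eps=\min(d_{i_0}-\tilde d_{i_0},\,\tilde d_{i_1}-d_{i_1})$, I apply the rotation that replaces the $(i_0,i_1)$-diagonal by $(\tilde d_{i_0}+\eps,\,\tilde d_{i_1}-\eps)$; this is feasible because both new values lie in $[\tilde d_{i_0},\tilde d_{i_1}]\subseteq[\mu_1,\mu_2]$. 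After the move, at least one of $i_0,i_1$ has reached its target value $d_i$. Since $|I_0\cup I_1|<\infty$ and the move preserves the total sum $\sum_{I_0\cup I_1}\tilde d_i=\sum_{I_0\cup I_1} d_i$, finitely many such moves terminate at an operator $E$ with diagonal $\{d_i\}_{i\in I}$, unitarily equivalent to $\tilde E$ via the composition of the rotations used.

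The only subtlety worth checking is that the separation $\tilde d_{i_0}\le d_{i_0}\le d_{i_1}\le \tilde d_{i_1}$ persists throughout the iteration, so that every intermediate $2\times 2$ move is feasible. This is automatic because each intermediate diagonal entry at an index of $I_0$ stays in the interval $[\tilde d_{i_0},d_{i_0}]$ and each at an index of $I_1$ stays in $[d_{i_1},\tilde d_{i_1}]$, and the ordering hypothesis cleanly separates these two intervals.
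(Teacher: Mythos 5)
Your proof is correct and takes essentially the same approach as the one the paper relies on: the paper defers the proof of Lemma~\ref{ops} to \cite{mbjj}, describing the argument there as ``a finite number of `convex moves' as at the end of the previous proof,'' which is precisely your iteration of $2\times 2$ rotations redistributing diagonal mass between an $I_0$-index and an $I_1$-index, with the ordering hypothesis keeping every intermediate move feasible. Your remark that the rotations can be chosen real orthogonal also matches the paper's note that this is why the lemma holds over real Hilbert spaces as well as complex.
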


We end this section by remarking that the indexing set $I$ in Theorem \ref{Kadison} need not be countable. In \cite{k2} the possibility that $I$ is an uncountable set is addressed in all but the most difficult case where $\{d_{i}\}$ and $\{1-d_{i}\}$ are nonsummable \cite[Theorem 15]{k2}. However, the case when $I$ is uncountable is a simple extension of the countable case, as we explain below.

\begin{proof}[Proof of reduction of Theorem \ref{Kadison} to countable case]
First, we consider a projection $P$ with diagonal $\{d_{i}\}_{i\in I}$ with respect to some orthonormal basis $\{e_{i}\}_{i\in I}$ of a Hilbert space $\mathcal H$. If $a$ or $b$ is infinite then there is nothing to show, so we may assume $a,b<\infty$. Set $J = \{i\in I:d_{i}=0\}\cup\{i\in I:d_{i} = 1\}$, and let $P'$ be the restriction of $P$ to the subspace $\mathcal H'=\overline{\lspan}\{e_{i}\}_{i\in I\setminus J}$. Since $e_{i}$ is an eigenvector for each $i\in J$, $\mathcal H'$ is an invariant subspace $P'(\mathcal H') \subset \mathcal H'$. Hence, $P'$ is a projection with diagonal $\{d_{i}\}_{i\in I\setminus J}$. The assumption that $a,b<\infty$ implies $I\setminus J$ is at most countable. Thus, the countable case of Theorem \ref{Kadison} applied to the operator $P'$ yields $a-b\in\Z$. This shows that (ii) is necessary. 

To show that (i) or (ii) is sufficient, we claim that it is enough to assume that all of $d_{i}$'s are in $(0,1)$. If we can find a projection $P$ with only these $d_{i}$'s, then we take $\mathbf I$ to be the identity and ${\bf 0}$ the zero operator on Hilbert spaces with dimensions equal to the cardinalities of the sets $\{i\in I: d_i=1\}$ and $\{i\in I: d_i=0\}$, respectively. Then, $P\oplus \mathbf I\oplus {\bf 0}$ has diagonal $\{d_{i}\}$. Since $a$ and $b$ do not change when we restrict to $(0,1)$, we may assume that $\{d_{i}\}_{i\in I}$ has uncountably many terms and is contained in $(0,1)$. There is some $n\in\N$ such that $J=\{i\in I:1/n < d_{i}<1-1/n\}$ has the same cardinality as $I$. Thus, we can partition $I$ into a collection of countable infinite sets $\{I_{k}\}_{k\in K}$ such that $I_{k}\cap J$ is infinite for each $k\in K$. Each sequence $\{d_{i}\}_{i\in I_{k}}$ contains infinitely many terms bounded away from $0$ and $1$, thus (ii) holds. Again, by the countable case of Theorem \ref{Kadison}, for each $k\in K$ there is a projection $P_{k}$ with diagonal $\{d_{i}\}_{i\in I_{k}}$. Thus, $\bigoplus_{k\in K}P_{k}$ is a projection with diagonal $\{d_{i}\}_{i\in I}$.
\end{proof}

\section{Carpenter's Theorem part i}

The goal of this section is to give a proof of the sufficiency of (i) in Theorem \ref{Kadison}. As a corollary of Theorem \ref{Horn finite rank} we have the summable version of the Carpenter's Theorem.

\begin{thm}\label{fcpt} Let $M\in\N\cup\{\infty\}$, and let $\{d_{i}\}_{i=1}^{M}$ be a sequence in $[0,1]$. If $\sum_{i=1}^{M}d_{i}\in\N$, then there is a projection $P$ with diagonal $\{d_{i}\}$.\end{thm}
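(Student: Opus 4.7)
The plan is to derive Theorem~\ref{fcpt} as a direct corollary of Horn's Theorem (Theorem~\ref{Horn finite rank}) applied with the constant eigenvalue sequence $\lambda_1 = \cdots = \lambda_N = 1$, where $N := \sum_{i=1}^M d_i \in \N$. The trivial case $N = 0$ forces every $d_i = 0$, so $P = 0$ works; from now on I assume $N \ge 1$.

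Since Theorem~\ref{Horn finite rank} requires a nonincreasing diagonal, I would first reorder $\{d_i\}$ by a bijection $\sigma$ of $\{1,\dots,M\}$ so that $d_i' := d_{\sigma(i)}$ is nonincreasing. This is possible: when $M=\infty$, the summability $\sum d_i = N < \infty$ forces $d_i \to 0$, so at most finitely many $d_i$ exceed any positive threshold and a decreasing rearrangement exists. The majorization hypothesis \eqref{finite rank majorization} for $\{\lambda_i\}_{i=1}^N$ and $\{d_i'\}_{i=1}^M$ is then immediate: $\sum_{i=1}^n d_i' \le n = \sum_{i=1}^n \lambda_i$ for $n \le N$ because each $d_i' \le 1$, and $\sum_{i=1}^M d_i' = N = \sum_{i=1}^N \lambda_i$ by hypothesis. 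Theorem~\ref{Horn finite rank} therefore produces a positive rank-$N$ operator $S'$ on an $M$-dimensional Hilbert space $\Hil$, with diagonal $\{d_i'\}$ and $N$ positive eigenvalues all equal to $1$.

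Because $S'$ is positive with spectrum contained in $\{0,1\}$, the spectral theorem gives $(S')^2 = S'$, so $S'$ is automatically an orthogonal projection. To undo the reordering, I would conjugate by the permutation unitary $V$ defined on the standard basis by $V e_i = e_{\sigma^{-1}(i)}$; a direct calculation shows that $P := V^{\ast} S' V$ is a projection with $\langle P e_i, e_i \rangle = d_i$ for all $i$. There is no serious obstacle here; the only point that merits explicit mention is the passage from ``positive rank-$N$ operator with each positive eigenvalue equal to $1$'' to ``orthogonal projection'', which is immediate from spectral considerations.
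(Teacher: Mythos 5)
You take essentially the same route as the paper---invoke Horn's Theorem~\ref{Horn finite rank} with the constant spectrum $\lambda_1=\cdots=\lambda_N=1$ and observe that a positive operator whose nonzero eigenvalues are all $1$ is a projection---but there is a genuine gap in the reordering step. The claim that ``a decreasing rearrangement exists'' via a bijection $\sigma$ of the index set is false in general when $M=\infty$. If infinitely many $d_i$ are nonzero and at least one $d_i=0$ (for instance $d_i=2^{-(i+1)/2}$ for odd $i$ and $d_i=0$ for even $i$, so that $\sum_i d_i=1\in\N$), no bijection $\sigma:\N\to\N$ makes $\{d_{\sigma(i)}\}$ nonincreasing: a nonincreasing sequence that ever equals $0$ must stay at $0$, and hence would omit infinitely many positive terms, while a nonincreasing sequence that never equals $0$ would omit every index with $d_i=0$. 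Either way $\sigma$ fails to be a bijection.

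The paper sidesteps this exactly by what it chooses to sort: it lets $\{d_i'\}_{i=1}^{M'}$ be only the terms of $\{d_i\}$ lying in $(0,1]$, sorted nonincreasingly (this \emph{is} always possible since only finitely many terms exceed any positive threshold), applies Horn's Theorem to $\{d_i'\}_{i=1}^{M'}$ to obtain a projection $P'$, and finally forms $P'\oplus \mathbf{0}$ where $\mathbf{0}$ is the zero operator on a Hilbert space of dimension $|\{i:d_i=0\}|$. If you insert this separation of the zero entries (treating them by a direct sum rather than trying to fold them into the sorted sequence), the rest of your argument---the majorization verification, the spectral observation that the resulting operator is idempotent, and the permutation-conjugation to restore the original ordering---goes through without further difficulty.
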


\begin{proof} Let $\{d_{i}'\}_{i=1}^{M'}$ be the terms of $\{d_{i}\}$ in $(0,1]$, listed in nonincreasing order. Set $N=\sum_{i=1}^{M}d_{i}$, and define $\lambda_{i}=1$ for $i=1,\ldots,N$. Since $d_{i}'\leq 1$ for all $i$ we have
\begin{equation}\label{frcpt1}\sum_{i=1}^{n}d_{i}'\leq \sum_{i=1}^{n}\lambda_{i}\qquad \text{for } n=1,2,\ldots,N.\end{equation}
We also have 
\[\sum_{i=1}^{M'}d_{i}' = N = \sum_{i=1}^{N}\lambda_{i}.\]
By Theorem \ref{Horn finite rank} there is a rank $N$ self-adjoint operator $P'$ with positive eigenvalues $\{\lambda_{i}\}_{i=1}^{N}$ and diagonal $\{d_{i}'\}_{i=1}^{M'}$. Since $\lambda_{i} = 1$ for each $i$, the operator $P'$ is a projection. Let $\mathbf{0}$ be the zero operator on a Hilbert space with dimension equal to $|\{i\colon d_{i}=0\}|$. The operator $P'\oplus \mathbf{0}$ is a projection with diagonal $\{d_{i}\}_{i=1}^{M}$.
\end{proof}

\begin{cor}\label{cptf} Let $M\in\N\cup\{\infty\}$ and $\{d_{i}\}_{i=1}^{M}$ be a sequence in $[0,1]$. If $\sum_{i=1}^{M}(1-d_{i})\in\N$, then there is a projection $P$ with diagonal $\{d_{i}\}$.
\end{cor}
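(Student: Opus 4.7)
The plan is to deduce this immediately from Theorem \ref{fcpt} by passing to the orthogonal complement. If we set $d_i' = 1 - d_i$, then $\{d_i'\}_{i=1}^{M}$ is again a sequence in $[0,1]$, and by hypothesis $\sum_{i=1}^{M} d_i' \in \N$. Therefore Theorem \ref{fcpt} supplies a projection $Q$ on some Hilbert space $\Hil$ (with orthonormal basis indexing the sequence) whose diagonal with respect to that basis is $\{d_i'\}_{i=1}^{M} = \{1-d_i\}_{i=1}^{M}$.

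Now I would set $P = \mathbf{I}_{\Hil} - Q$. Since $Q$ is an orthogonal projection, so is $P$, and the diagonal entries of $\mathbf I$ in any orthonormal basis are all $1$, so the diagonal of $P$ in the chosen basis is $\{1 - (1 - d_i)\}_{i=1}^{M} = \{d_i\}_{i=1}^{M}$, as required.

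There is really no obstacle here: the corollary is the $1 \leftrightarrow 0$ symmetry of the previous theorem, and the only thing to verify is that the complementary sum hypothesis $\sum (1-d_i)\in\N$ is exactly the hypothesis of Theorem \ref{fcpt} applied to the flipped sequence, which it is by definition.
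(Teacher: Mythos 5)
Your proposal is correct and is exactly the paper's argument: apply Theorem~\ref{fcpt} to the flipped sequence $\{1-d_i\}$ and take the complementary projection $\mathbf I - Q$. Nothing to add.
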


\begin{proof}
This follows immediately from the observation that a projection $P$ has diagonal $\{d_{i}\}$ if and only if $\mathbf I-P$ is a projection with diagonal $\{1-d_{i}\}$.
\end{proof}

Finally, we can handle the general case (i) of the Carpenter's Theorem.

\begin{thm}\label{cptk} Let $\{d_{i}\}_{i\in I}$ be a sequence in $[0,1]$. If 
\begin{equation}\label{cptk1}
a = \sum_{d_{i}<1/2}d_{i}<\infty,\  b =\sum_{d_{i}\geq 1/2}(1-d_{i}) < \infty,
 \text{ and } a-b\in\Z,\end{equation}
then there exists a projection $P$ with diagonal $\{d_{i}\}$.
\end{thm}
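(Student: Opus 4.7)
The plan is to reduce Theorem \ref{cptk} to the summable cases already handled by Theorem \ref{fcpt} and Corollary \ref{cptf}, using Lemma \ref{ops} to absorb a fractional remainder. By the reduction at the end of Section 2, we may assume $I$ is countable; splitting off the indices with $d_i\in\{0,1\}$ as trivial $\mathbf{0}$ and $\mathbf{I}$ summands, we may further assume every $d_i\in(0,1)$. Set $I^-=\{i:d_i<1/2\}$ and $I^+=\{i:d_i\geq1/2\}$.

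First the easy cases. If $I^+$ is finite then $\sum_{i\in I}d_i = a+|I^+|-b$, which is a non-negative integer by $a-b\in\Z$, and Theorem \ref{fcpt} produces the projection. Symmetrically, if $I^-$ is finite then $\sum_{i\in I}(1-d_i) = |I^-|-a+b\in\N$ and Corollary \ref{cptf} applies. Henceforth assume both $I^\pm$ are infinite.

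For the main case, I partition $I = I_A \sqcup I_B$ with $I_A = (I^-\setminus F^-)\cup F^+$ and $I_B = F^-\cup(I^+\setminus F^+)$, for finite $F^\pm\subset I^\pm$ to be chosen. A direct computation gives
\[
\sum_{i\in I_A} d_i - \sum_{i\in I_B}(1-d_i) = (a-b) + |F^+| - |F^-| \in\Z,
\]
so it suffices to arrange that $\sum_{I_A} d_i\in\N$; the other sum is then automatically integral and non-negative. To achieve this I apply Lemma \ref{ops} with finite $I_0\subset I^-\setminus F^-$ and $I_1\subset I^+\setminus F^+$, and $\eta_0$ equal to the fractional part of $\sum_{I_A} d_i$, producing a modified sequence $\{\tilde d_i\}$ with $\sum_{I_A} \tilde d_i\in\N$ and $\sum_{I_B}(1-\tilde d_i)\in\N$. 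Theorem \ref{fcpt} and Corollary \ref{cptf} then yield projections $P_A, P_B$ with these diagonals on $I_A$ and $I_B$, and $P_A\oplus P_B$ has diagonal $\{\tilde d_i\}$. Finally, Lemma \ref{ops}(ii) delivers a unitarily equivalent projection with the original diagonal $\{d_i\}$.

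The main obstacle is verifying that $F^\pm$ can be chosen so that the Lemma \ref{ops} constraints $\eta_0 < a-\sum_{F^-} d_i$ and $\eta_0 < b-\sum_{F^+}(1-d_i)$ are both satisfied (strict, because $I^\pm\setminus F^\pm$ are infinite, so the reservoirs are only approached and not attained by finite $I_0, I_1$). When $a,b\geq 1$ we simply take $F^\pm=\emptyset$ and obtain $\eta_0 = a - \lfloor a\rfloor < 1 \leq \min(a,b)$. The remaining scenarios, constrained by $a-b\in\Z$, are (i) $a<1\leq b$, (ii) $b<1\leq a$, and (iii) $a=b<1$. In (i) we adjoin to $F^+$ some $j^\ast\in I^+$ with $d_{j^\ast}$ close enough to $1$ (possible since $d_j\to 1$ along $I^+$, by $b<\infty$), pushing $\sum_{I_A} d_i$ above $1$; in (ii) we symmetrically adjoin an $i^\ast\in I^-$ with $d_{i^\ast}$ close to $0$; and in (iii) both maneuvers are applied simultaneously. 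A short calculation in each case confirms that the strict inequalities $\eta_0 < a-\sum_{F^-} d_i$ and $\eta_0 < b-\sum_{F^+}(1-d_i)$ are thereby restored.
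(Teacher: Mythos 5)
Your proposal is correct and uses the same essential strategy as the paper: handle the summable cases via Theorem \ref{fcpt} and Corollary \ref{cptf}, and in the main case swap a finite block across the $1/2$--threshold, use Lemma \ref{ops} to absorb the fractional remainder, split into two summable pieces, and conclude with Lemma \ref{ops}(ii). The only real difference is organizational: where you run a three-way case analysis ($a<1\le b$, $b<1\le a$, $a=b<1$) to guarantee the reservoir conditions, the paper avoids the split by first shrinking $J_0$ to a cofinite $J_0'$ with $\sum_{J_0'}d_i<1-d_{i_1}$ and then adjoining a single $i_2\in J_1$ with $d_{i_2}+\sum_{J_0'}d_i\ge1$, which makes the Lemma \ref{ops} hypotheses hold automatically in one stroke.
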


\begin{proof} First, note that if $\{d_{i}\}$ or $\{1-d_{i}\}$ is summable, then by \eqref{cptk1} its sum is in $\N$. Thus, we can appeal to Theorem \ref{fcpt} or Corollary \ref{cptf}, resp., to obtain the desired projection. Hence, we may assume both $0$ and $1$ are limit points of the sequence $\{d_{i}\}$.

Next, we claim that it is enough to prove the theorem under the assumption that $d_{i}\in(0,1)$ for all $i$. Indeed, if $P$ is a projection with diagonal $\{d_{i}\}_{d_{i}\in(0,1)}$, $\mathbf I$ is the identity operator on a space of dimension $|\{i\colon d_{i}=1\}|$, and $\mathbf{0}$ is the zero operator on a space of dimension $|\{i\colon d_{i}=0\}|$, then $P\oplus \mathbf I\oplus\mathbf{0}$ is a projection with diagonal $\{d_{i}\}_{i\in I}$.

Define $J_{0} = \{i\in I:d_{i}<1/2\}$ and $J_{1}=\{i\in I:d_{i}\geq1/2\}$. Choose $i_{1}\in J_{1}$ such that $d_{i_{1}}\leq d_{i}$ for all $i\in J_{1}$. Choose $J'_{0}\subseteq J_{0}$ such that $J_{0}\setminus J'_{0}$ is finite and 
\[
\sum_{i\in J'_{0}}d_{i}<1-d_{i_{1}}.
\]
Let $i_{2}\in J_{1}$ be such that $d_{i_{2}}> d_{i_{1}}$ and
\[d_{i_{2}} + \sum_{i\in J'_{0}}d_{i} \geq 1.\]
Set
\begin{equation}\label{cptk2}
\eta_{0} = \sum_{i\in J'_{0}}d_{i}-(1-d_{i_{2}} ) <\sum_{i\in J'_{0}}d_{i}< 1- d_{i_1}.
\end{equation}
Let $I_0\subset J'_0$ be a finite set such that
\begin{equation}\label{cptk3}
\sum_{i\in I_0}d_{i}>\eta_{0}.
\end{equation}
By \eqref{cptk2} and \eqref{cptk3}, we can apply Lemma \ref{ops} to finite subsets $I_0$ and $I_1=\{i_1\}$ to obtain a sequence $\{\tilde{d}_{i}\}_{i\in I}$ coinciding with $\{d_{i}\}_{i\in I}$ outside of $I_0\cup I_1$ and such that
\[
\sum_{i\in I_0}\tilde{d}_{i} = \sum_{i\in I_0}d_{i} - \eta_{0}\qquad\text{and}\qquad 1-\tilde{d}_{i_{1}} = 1-d_{i_{1}} - \eta_{0}.\]
Note that
\[
\sum_{i\in J'_0\cup\{i_{2}\}}\tilde{d}_{i} = d_{i_{2}} + \sum_{i\in J'_{0}\setminus I_{0}}d_{i} + \sum_{i\in I_{0}}\tilde{d}_{i} = d_{i_{2}} + \sum_{i\in J'_{0}\setminus I_{0}}d_{i} + \sum_{i\in I_{0}}d_{i} - \eta_{0} = 1.
\]
Thus, by Theorem \ref{fcpt} there is a projection $P_{1}$ with diagonal $\{\tilde{d}_{i}\}_{i\in J'_{0}\cup\{i_{2}\}}$.
Next, we note that
\begin{align*}
\sum_{i\in I\setminus(J'_{0}\cup\{i_{2}\})} (1-\tilde{d}_{i})
&=
\sum_{i\in J_{0}\setminus J'_{0}}(1-\tilde{d}_{i}) + \sum_{i\in J_{1}\setminus \{i_{2}\}}(1-\tilde{d}_{i}) 
\\
&
= |J_{0}\setminus J'_{0}| - \sum_{i\in J_{0}\setminus J'_{0}}d_{i} + \sum_{i\in J_{1}\setminus \{i_{2}\}}(1-d_{i}) - \eta_{0}\\
 & = |J_{0}\setminus J'_{0}| - \sum_{i\in J_{0}}d_{i} + \sum_{i\in J_{1}}(1-d_{i}) = |J_{0}\setminus J'_{0}| - a+b\in\N.
\end{align*}
By Corollary \ref{cptf} there is a projection $P_{2}$ with diagonal $\{\tilde{d}_{i}\}_{i\in I\setminus(J'_{0}\cup\{i_{2}\})}$.

The projection $P_{1}\oplus P_{2}$ has diagonal $\{\tilde{d}_{i}\}_{i\in I}$. By Lemma \ref{ops} (ii) there is an operator $P$ with diagonal $\{d_{i}\}_{i\in I}$ which is unitarily equivalent to $P_{1}\oplus P_{2}$. Thus, $P$ is the required projection.
\end{proof}

In \cite[Remark 8]{k1} Kadison asked whether it is possible to construct projections with specified diagonal so that all its entries are real and nonnegative. While the answer is positive for rank one, in general it is negative for higher rank projections.

\begin{ex}
Consider any sequence $\{d_i\}_{i=1}^3$ of numbers in $(0,1)$ such that $d_1+d_2+d_3=2$. By Theorem \ref{fcpt} there exists a projection $P$ on $\R^3$ with such diagonal. However, some entries of $P$ must be negative. Indeed, $\mathbf I -P$ is rank one projection. Hence, $(\mathbf I -P)x=\langle x, v \rangle v $ for some unit vector $v=(v_1,v_2,v_3)\in \R^3$. That is, $(i,j)$ entry of $\mathbf I -P$ equals $v_iv_j$. In particular, $(v_i)^2=1-d_i>0$ for each $i$. This implies that for some $i\ne j$, the off-diagonal entry $(i,j)$ of $\mathbf I -P$ must be positive. Consequently, $(i,j)$ entry of $P$ is negative.
\end{ex}

\section{The algorithm and Carpenter's Theorem part ii}

In this section we introduce an algorithmic technique for finding a projection with prescribed diagonal. The main result of this section is Theorem \ref{cptalg}. Given a non-summable sequence $\{d_{i}\}$ with all terms in $[0,1/2]$, except possibly one term in $(1/2,1)$, Theorem \ref{cptalg} produces an  orthogonal projection with the diagonal $\{d_{i}\}$. Applying this result countably many times allows us to deal with all possible diagonal sequences in part (ii) of Carpenter's Theorem.

The procedure of Theorem \ref{cptalg} is reminiscent to spectral tetris construction of tight frames introduced by Casazza et al. in \cite{cfmwz}, and further investigated in \cite{chkwz}. In fact, the infinite matrix constructed in the proof of Theorem \ref{cptalg} consists of column vectors forming a Parseval frame with squared norms prescribed by the sequence $\{d_{i}\}$. However, our construction was discovered independently with a totally different aim than that of \cite{cfmwz}.

\begin{lem}\label{translem} Let $\sigma,d_{1},d_{2}\in[0,1]$. If $\max\{d_{1},d_{2}\}\leq\sigma$ and $\sigma\leq d_{1}+d_{2}$, then there exists a number $a\in[0,1]$ such that the matrix
\begin{equation}\label{transmatrix}
\left[\begin{array}{cc}
a & \sigma-a\\
d_{1}-a & d_{2}-\sigma+a
\end{array}\right]
\end{equation}
has entries in $[0,1]$ and
\begin{equation}\label{transeq}a(d_{1}-a)=(\sigma-a)(d_{2}-\sigma+a).\end{equation}
Moreover, if $d_{1}+d_{2}<2\sigma$, then $a$ is unique and given by
\begin{equation}\label{transsol} a=\frac{\sigma(\sigma-d_{2})}{2\sigma-d_{1}-d_{2}}.\end{equation}
\end{lem}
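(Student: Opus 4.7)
The plan is to analyze two independent conditions on $a$: the entrywise constraint that the four entries of \eqref{transmatrix} lie in $[0,1]$, and the algebraic equation \eqref{transeq}. Each entry being in $[0,1]$ produces an interval for $a$; intersecting the four intervals and using $d_1,d_2\leq\sigma\leq 1$, only two of the bounds turn out to be active, and the feasible set reduces to
\[
a\in\bigl[\sigma-d_2,\ \min\{d_1,\,1+\sigma-d_2\}\bigr].
\]
The hypothesis $\sigma\leq d_1+d_2$ rearranges to $\sigma-d_2\leq d_1$, so this interval is nonempty.

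Next I would expand equation \eqref{transeq}. The $-a^2$ terms on both sides cancel, leaving the linear equation $a(d_1+d_2-2\sigma)=\sigma(d_2-\sigma)$. When $d_1+d_2<2\sigma$ this has the unique solution given by \eqref{transsol}, and $a\geq 0$ because numerator and denominator have matching signs. To finish I would check that $a$ lies in the feasible interval. The lower bound $a\geq \sigma-d_2$ reduces, after clearing the positive denominator, to the hypothesis $d_1+d_2\geq\sigma$. The upper bound $a\leq d_1$ becomes, after the same maneuver, the inequality
\[
(\sigma-d_1)(\sigma-d_1-d_2)\leq 0,
\]
which holds because $\sigma-d_1\geq 0$ by one hypothesis while $\sigma-d_1-d_2\leq 0$ by the other. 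The bound $a\leq 1+\sigma-d_2$ then follows from $a\leq\sigma\leq 1\leq 1+\sigma-d_2$; the middle inequality $a\leq\sigma$ is itself equivalent to $d_1\leq\sigma$ after cross-multiplication.

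In the degenerate case $d_1+d_2=2\sigma$, the inequalities $d_1,d_2\leq\sigma$ force $d_1=d_2=\sigma$. Then both sides of \eqref{transeq} collapse to $a(\sigma-a)$, so the equation holds for every $a\in[0,\sigma]$; this is consistent with the lemma only asserting uniqueness in the strict case. The only step that requires any thought is the verification $a\leq d_1$—the factorization $(\sigma-d_1)(\sigma-d_1-d_2)\leq 0$ is where the two hypotheses combine—but the remainder of the argument is routine bookkeeping.
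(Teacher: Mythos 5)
Your proof is correct. It follows the same skeleton as the paper's — cancel the quadratic terms, solve the resulting linear equation for $a$, and handle the degenerate case $d_1=d_2=\sigma$ separately — but the verification that the second-row entries are nonnegative proceeds along a genuinely different route. You precompute the feasible interval for $a$ (which, given $d_1,d_2\le\sigma\le 1$, collapses to $[\sigma-d_2,\,d_1]$; the extra $\min$ with $1+\sigma-d_2$ is redundant) and then verify $a\in[\sigma-d_2,d_1]$ by clearing the positive denominator in \eqref{transsol} and factoring, the key identity being
\[
\sigma(\sigma-d_2)-d_1(2\sigma-d_1-d_2)=(\sigma-d_1)(\sigma-d_1-d_2)\le 0.
\]
The paper instead argues indirectly: after establishing $a,\sigma-a\ge 0$, it observes that the second-row entries sum to $d_1+d_2-\sigma\in[0,\sigma)$, so if one were negative the other would be strictly positive; then the two sides of \eqref{transeq} would have opposite strict signs unless $a=\sigma-a=0$, which contradicts $\sigma>0$. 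The paper's sign argument avoids the algebraic factorization and is arguably slicker; your direct computation is more mechanical but makes the role of each hypothesis ($d_1\le\sigma$ and $\sigma\le d_1+d_2$) transparent in the single product $(\sigma-d_1)(\sigma-d_1-d_2)\le 0$. Both are valid.
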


\begin{proof} First, assume $\max\{d_{1},d_{2}\}\leq\sigma$ and $\sigma\leq d_{1}+d_{2}$. If $d_{1}=d_{2}=\sigma$ then any $a\in[0,\sigma]$ will satisfy \eqref{transeq} and the matrix \eqref{transmatrix} will have entries in [0,1]. Thus, we may additionally assume $d_{1}+d_{2}<2\sigma$, and hence $\sigma>0$. Since the quadratic terms in \eqref{transeq} cancel out, the equation is linear and the unique solution is given by \eqref{transsol}. It remains to show that the entries of the matrix in \eqref{transmatrix} are in $[0,1]$.
It is clear that $a\geq 0$. Next, we calculate
\begin{equation}\label{translem.1}\sigma-a = \sigma\left(1-\frac{\sigma-d_{2}}{2\sigma-d_{1}-d_{2}}\right) = \frac{\sigma(\sigma-d_{1})}{2\sigma-d_{1}-d_{2}},\end{equation}
which implies that $\sigma-a\geq 0$. Since $\sigma\leq1$ we clearly have $a,\sigma-a\in[0,1]$. It remains to prove that the second row of \eqref{transmatrix} has nonnegative entries. Since $d_{1}+d_{2}\in[\sigma,2\sigma)$ we have
\[(d_{1}-a) + (d_{2}-\sigma+a) = d_{1}+d_{2} - \sigma\in[0,\sigma).\]
If one of $d_{1}-a$ or $d_{2}-\sigma+a$ is negative, then the other must be positive. From \eqref{transeq} we see that $a=\sigma-a=0$. This contradicts the assumption that $\sigma > 0$. Thus, both $d_{1}-a$ and $d_{2}-\sigma+a$ are nonnegative.
\end{proof}

\begin{lem}\label{inj} Let $\{d_{i}\}_{i\in \N}$ be a sequence such that $d_{1}\in [0,1)$, $d_{i}\in[0,\frac{1}{2}]$ for $i\geq 2$ and $\sum_{i=1}^{\infty}d_{i}=\infty$. There is a bijection $\pi:\N\to\N$ such that for each $n\in\N$ we have
\begin{equation}\label{inj.1}
d_{\pi(k_{n}-1)}\geq d_{\pi(k_{n})}\quad\text{where}\quad 
k_{n} := \min\left\{k\in \N:\sum_{i=1}^{k}d_{\pi(i)}\geq n\right\}.\end{equation}
\end{lem}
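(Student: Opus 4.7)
The plan is to construct $\pi$ iteratively, handling the distinguished element $d_1$ first and then processing one ``block'' per integer crossing. First I would set $\pi(1)=1$ to dispose of $d_1\in[0,1)$, since it is the only element that may exceed $1/2$; because $d_1<1$ this does not yet cross the integer $1$, and because $d_i\le 1/2$ for $i\ge 2$, every element placed thereafter has value at most $1/2$.

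For each $n\ge 1$, I would build the \emph{block} (the positions $k_{n-1}+1,\dots,k_n$ over which the partial sum advances from $[n-1,n)$ into $[n,n+1)$) by the following greedy procedure. Let $s$, $\delta=n-s$, $R$, and $j^*:=\min R$ denote respectively the current partial sum, the current deficit, the remaining indices, and the smallest unused index. I would form a finite set $B=\{j^*\}\cup T\subseteq R$ where $T$ consists of the $k'$ largest-valued elements of $R\setminus\{j^*\}$, with $k'$ minimal so that $d_{j^*}+\sum_{j\in T}d_j\ge \delta$; such $k'$ exists because $\sum d_i=\infty$. The elements of $B$ are then placed in $\pi$ at the next $|B|$ positions in weakly decreasing order of $d$-value. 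Block $n$ consists of those positions up to and including the first one at which the partial sum reaches $n$; by the decreasing-order placement, the last two elements of block $n$ automatically satisfy $d_{\pi(k_n-1)}\ge d_{\pi(k_n)}$. Any elements of $B$ placed after the crossing ``spill over'' into block $n+1$ and remain in decreasing order, so the ordering condition carries forward without disruption.

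The main obstacles are bijectivity and a uniform verification of the ordering condition at crossings. Bijectivity holds because each $j^*=\min R$ is included in $B$ and hence placed in $\pi$ at its block. The ordering condition within a block follows from the decreasing-order placement, provided the crossing does not occur at the very first element (which would make the ``pre-closer'' the spillover from the previous block). To guarantee this for $n\ge 2$, I would observe that every closer satisfies $d_{\pi(k_n)}\le 1/2$, so the overflow $s_{k_n}-n<1/2$; hence the next deficit satisfies $\delta>1/2$, and since every remaining $d_j\le 1/2<\delta$, no single element can by itself cross $n$. This forces the block to contain at least two elements of $B$, giving a pre-closer and closer both in the current block. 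The most delicate verifications will be (a) the block-$1$ edge case, where $d_1$ may be close to $1$ and block $1$ may have size two with $[\pi(1),\pi(2)]=[1,j]$; here the condition $d_1\ge d_j$ holds automatically because $d_j\le 1/2\le d_1$, and (b) the bookkeeping for spillovers across successive blocks, ensuring that the decreasing-order property of the pre-closer/closer pair is never spoiled by a spillover element sitting just before a freshly added element.
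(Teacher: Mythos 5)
Your greedy construction (forming each run from the smallest unused index together with enough large-valued unused elements to cross the next integer, then sorting that run decreasingly) is genuinely different from the paper's. The paper partitions $\N$ into consecutive blocks $\{m_{n-1}+1,\dots,m_n\}$ determined by the crossing points of the partial sums $\sum_{i\le k}d_i$ in the \emph{original} order, sorts each finite block in place, and then the inequality $m_{n-1}+2\le k_n\le m_n$ (so both $k_n-1$ and $k_n$ lie in a single sorted block) falls out of nothing more than $d_i\le 1/2$ for $i\ge 2$. That route avoids any search over infinite sets.

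Two places in your proposal have real gaps. First, ``the $k'$ largest-valued elements of $R\setminus\{j^*\}$'' is not well-defined: $R\setminus\{j^*\}$ is infinite and need have no maximum (e.g.\ $d_i=1/2-2^{-i}$ for $i\ge 2$). You should take instead \emph{any} subset $T\subseteq R\setminus\{j^*\}$ of minimal cardinality with $d_{j^*}+\sum_{j\in T}d_j\ge\delta$; such a set exists because $\sum d_i=\infty$. Second and more substantively, the deficit argument bounds the wrong quantity. You estimate the overflow $s_{k_n}-n<1/2$ at the crossing and conclude the next deficit exceeds $1/2$, but the deficit relevant to forming $B'$ is $(n+1)-s_{\text{after }B}$, measured \emph{after} $B$'s spillover has been placed, and this is smaller than $(n+1)-s_{k_n}$ by exactly the spillover sum — which you never controlled. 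To close this you need the minimality of $|B|$: letting $t_{\min}$ be the smallest $d$-value in $T$ (when $T\neq\emptyset$), removing it leaves a set of cardinality $k'-1$ whose sum with $d_{j^*}$ is $<\delta$, hence $\sum_B<\delta+t_{\min}\le\delta+1/2$ and so $s_{\text{after }B}<n+1/2$; when $T=\emptyset$, $\sum_B=d_{j^*}\le 1/2<\delta+1/2$ directly. With this repair (and the modified definition of $T$) the deficit for $B'$ really does exceed $1/2$ and $k_{n+1}$ lands at least two positions into the sorted run $B'$, so your conclusion holds — but as written, the spillover bookkeeping your final sentence flags as delicate is exactly where the argument breaks.
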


\begin{proof}For $n\in \N$ define
\begin{equation}\label{alg.1}
 m_{n}:= \min\left\{k\in \N:\sum_{i=1}^{k}d_{i}\geq n\right\}.
\end{equation}
Define a bijection $\pi_{n}:\{m_{n-1}+1,\ldots,m_{n}\}\to\{m_{n-1}+1,\ldots,m_{n}\}$ such that $\{d_{\pi(i)}\}_{i=m_{n-1}+1}^{m_{n}}$ is in nonincreasing order with the convention that $m_0=0$. Finally, define a bijection $\pi:\N \to \N$ by 
\[
\pi(i)=\pi_n(i) \qquad\text{if } m_{n-1}<i \le m_n,\  n\in \N.
\]

We claim that
\begin{equation}\label{alg.2}
m_{n-1}+2 \le k_n \le m_n \qquad\text{for all }n\in \N.
\end{equation}
Indeed, by the minimality of $m_{n-1}$ we have for $n\ge 2$,
\[
\sum_{i=1}^{m_{n-1}+1}d_{\pi(i)} = 
\sum_{i=1}^{m_{n-1}}d_{i}+d_{\pi(m_{n-1}+1)}<(n-1/2)+1/2=n.
\]
The above holds also holds trivially for $n=1$. Thus, $k_n >m_{n-1}+1$ for all $n\in \N$. On the other hand, we have
\[
\sum_{i=1}^{m_{n}}d_{\pi(i)} = 
\sum_{i=1}^{m_{n}}d_{i}\ge n.
\]
This yields $k_n \le m_n$ and, thus, \eqref{alg.2} is shown.
By \eqref{alg.2} we have $m_{n-1}+1 \le k_n-1 <k_n \le m_n$.
Since $\{d_{\pi(i)}\}_{i=m_{n-1}+1}^{m_n}$ is nonincreasing, this yields \eqref{inj.1}.
\end{proof}

\begin{thm}\label{cptalg} Let $\{d_{i}\}_{i\in I}$ be a sequence such that $d_{i_{0}}\in [0,1)$ for some $i_{0}\in I$, $d_{i}\in[0,\frac{1}{2}]$ for all $i\neq i_{0}$, and $\sum_{i\in I}d_{i}=\infty$. There exists an orthogonal projection $P$ with diagonal $\{d_{i}\}_{i\in I}$.
\end{thm}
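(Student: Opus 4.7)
I will build an isometry $V\colon\ell^2(\N)\to\ell^2(\N)$ whose $i$-th row has squared norm $d_i$; then $P=VV^*$ is a projection (since $V^*V=\mathbf I$ forces $P^2=P$) with $P_{ii}=\sum_n|V_{i,n}|^2=d_i$. After the routine reductions---absorbing the $d_i=0$ indices into a zero summand and re-indexing so that $I=\N$ and $i_0=1$ (the hypothesis $\sum_i d_i=\infty$ forces only countably many nonzero terms)---I apply Lemma~\ref{inj} to produce a bijection $\pi\colon\N\to\N$ and thresholds $k_n=\min\{k:s_k\ge n\}$, with $s_k:=\sum_{i=1}^k d_{\pi(i)}$, satisfying $d_{\pi(k_n-1)}\ge d_{\pi(k_n)}$. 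The proof of Lemma~\ref{inj} also supplies the gap $k_n\ge k_{n-1}+2$, which is essential.

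\textbf{Construction of $V$.} I will describe $V$ column by column. Column $n$ will be supported on rows $\pi(i)$ with $k_{n-1}-1\le i\le k_n$ (indices $\le 0$ omitted for $n=1$), split into three pieces: an \emph{interior} group of rows $\pi(i)$ with $k_{n-1}<i<k_n-1$, each carrying a single entry $\sqrt{d_{\pi(i)}}$ in column $n$; a \emph{right-boundary} $2\times 2$ block occupying rows $\pi(k_n-1),\pi(k_n)$ and columns $n,n+1$; and a \emph{left-boundary} block which coincides with the right-boundary block of transition $n-1$. For the right-boundary I apply Lemma~\ref{translem} with $d_1:=d_{\pi(k_n-1)}$, $d_2:=d_{\pi(k_n)}$, and $\sigma:=\sigma^{(n)}:=n-s_{k_n-2}$ (setting $s_{-1}=0$), producing $a_n$ and the four squared entries $a_n,\,\sigma-a_n,\,d_1-a_n,\,d_2-\sigma+a_n$. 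I take positive square roots throughout, except that the $(\pi(k_n),n+1)$-entry is assigned a minus sign; then the inner product of columns $n$ and $n+1$ on their two shared rows is $\sqrt{a_n(d_1-a_n)}-\sqrt{(\sigma-a_n)(d_2-\sigma+a_n)}$, which vanishes by \eqref{transeq}.

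\textbf{Verification and main obstacle.} The delicate check is that Lemma~\ref{translem}'s hypothesis $\max\{d_1,d_2\}\le\sigma^{(n)}\le d_1+d_2$ holds at every $n$, together with $\sigma^{(n)}\in[0,1]$. The ordering from Lemma~\ref{inj} identifies $\max\{d_1,d_2\}$ as $d_{\pi(k_n-1)}$; rewriting $\sigma^{(n)}=(n-s_{k_n-1})+d_{\pi(k_n-1)}$ and using $s_{k_n-1}<n\le s_{k_n}$ gives both inequalities in one stroke, while $s_{k_n-2}\ge s_{k_{n-1}}\ge n-1$ yields $\sigma^{(n)}\le 1$. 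An induction on $n$ then confirms that column $n$ has squared norm $1$: its three contributions are $(d_{\pi(k_{n-1}-1)}+d_{\pi(k_{n-1})}-\sigma^{(n-1)})+(s_{k_n-2}-s_{k_{n-1}})+\sigma^{(n)}$, which collapses to $1$ after substituting $\sigma^{(m)}=m-s_{k_m-2}$. Row squared-norms equal $d_i$: interior rows trivially, boundary rows by the row-sum identities of Lemma~\ref{translem}'s matrix. Non-adjacent columns are orthogonal because $k_{n+1}\ge k_n+2$ keeps their supports disjoint; adjacent columns are orthogonal by the sign convention and \eqref{transeq}. All entries of $V$ are real, so the construction works over $\R$ as well as $\C$. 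The main obstacle is aligning the recursion for $\sigma^{(n)}$ with the hypotheses of Lemma~\ref{translem}, and the Lemma~\ref{inj} ordering $d_{\pi(k_n-1)}\ge d_{\pi(k_n)}$ is precisely what is needed to pin down which of $d_1,d_2$ realizes the maximum.
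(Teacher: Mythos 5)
Your proof is correct and is essentially the paper's argument, presented in transposed form: the paper builds the orthonormal row vectors $v_n$ directly and sets $P=\sum\langle\cdot,v_n\rangle v_n$, while you assemble the same entries into the columns of an isometry $V$ and set $P=VV^*$. The same ingredients appear in the same roles—Lemma~\ref{inj} for the ordering $d_{\pi(k_n-1)}\ge d_{\pi(k_n)}$ and the gap $k_n\ge k_{n-1}+2$, the quantity $\sigma^{(n)}=n-s_{k_n-2}$, Lemma~\ref{translem} applied at each transition, and the sign trick for orthogonality of adjacent columns—and your explicit check that $\sigma^{(n)}\le1$ (which the paper leaves implicit) is a welcome bit of extra care.
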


\begin{proof} Since $I$ is a countable set and $\sum_{i\in I} d_{i} = \infty$ we may assume without loss of generality that $I = \N$ and $i_{0} = 1$. By Lemma \ref{inj} there is a bijection $\pi:\N\to\N$ such that \eqref{inj.1} holds.

For each $n\in \N$ set
\begin{equation}\label{alg.7}\sigma_{n} = n - \sum_{i=1}^{k_{n}-2}d_{\pi(i)}.\end{equation}
From the definition of $k_{n}$ we see that 
\begin{equation}\label{alg.8}\sigma_{n} = n - \sum_{i=1}^{k_{n}}d_{\pi(i)} + d_{\pi(k_{n}-1)} + d_{\pi(k_{n})}\leq d_{\pi(k_{n}-1)} + d_{\pi(k_{n})}.\end{equation}
From the minimality of $k_{n}$ and \eqref{inj.1} we see that
\[\sigma_{n} = n - \sum_{i=1}^{k_{n}-1}d_{\pi(i)} + d_{\pi(k_{n}-1)} \geq d_{\pi(k_{n}-1)}\geq d_{\pi(k_{n})},\]
which implies that
\begin{equation}\label{alg.9} \sigma_{n}\geq \max\{d_{\pi(k_{n}-1)}, d_{\pi(k_{n})}\}.\end{equation}
By Lemma \ref{translem} for each $n$ there exists $a_{n}\in[0,1]$ such that the matrix
\begin{equation*}\left[\begin{array}{cc}
a_{n} & \sigma_{n}-a_{n}\\
d_{\pi(k_{n}-1)}-a_{n} & d_{\pi(k_{n})}-\sigma_{n}+a_{n}
\end{array}\right]\end{equation*}
has non-negative entries and
\begin{equation}\label{orth}a_{n}(d_{\pi(k_{n}-1)}-a_{n})=(\sigma_{n}-a_{n})(d_{\pi(k_{n})}-\sigma_{n}+a_{n}).\end{equation}

Let $\{e_{i}\}_{i\in\N}$ be an orthonormal basis for a Hilbert space $\Hil$. Set
\[v_{1} = \sum_{i=1}^{k_{1}-2} d_{\pi(i)}^{1/2}e_{i} + a_{1}^{1/2}e_{k_{1}-1} - (\sigma_{1}-a_{1})^{1/2}e_{k_{1}},\]
and for $n\geq 2$ define
\begin{align*}
 v_{n} & = (d_{\pi(k_{n-1}-1)}-a_{n-1})^{1/2}e_{k_{n-1}-1} + (d_{\pi(k_{n-1})}-\sigma_{n-1}+a_{n-1})^{1/2}e_{k_{n-1}}\\
 & + \sum_{i=k_{n-1}+1}^{k_{n}-2}d_{\pi(i)}^{1/2}e_{i} + a_{n}^{1/2}e_{k_{n}-1} - (\sigma_{n}-a_{n})^{1/2}e_{k_{n}}.
\end{align*}
We can visualize $\{v_{n}\}_{n\in \N}$ as row vectors expanded in the orthonormal basis $\{e_i\}_{i\in I}$ by the following infinite matrix.
\[
\begin{bmatrix}
v_1 \\
\hline
v_2 \\
\hline
v_3
\\ \hline
\cdots
\end{bmatrix}
=
\left[\begin{array}{ccccccccccccc}
\sqrt{d_\cdot} & \cdots  & \sqrt{a_1} & -\sqrt{\sigma_1-a_1} 
\\
  &  & \sqrt{d_\cdot-a_1} & \sqrt{d_\cdot-\sigma_1+a_1} &\sqrt{d_\cdot} & \cdots  & \sqrt{a_2} & -\sqrt{\sigma_2-a_2} 
\\
& & & & & &\sqrt{d_\cdot-a_2} & \sqrt{d_\cdot-\sigma_2+a_2}&  \cdots  
\\
& & & & & & & & \cdots
\end{array}
\right]
\]
In the above matrix empty spaces represents $0$ and $d_\cdot$ is an abbreviation for $d_{\pi(i)}$ in $i$th column. 

We claim that $\{v_{n}\}_{n\in \N}$ is an orthonormal set in $\mathcal H$. Indeed, by \eqref{alg.7} we have for $n\geq 2$
\begin{align*}
\norm{v_{n}}^{2} & = d_{\pi(k_{n-1}-1)}-a_{n-1} + d_{\pi(k_{n-1})}-\sigma_{n-1}+a_{n-1} + \sum_{i=k_{n-1}+1}^{k_{n}-2}d_{\pi(i)} + a_{n} + \sigma_{n}-a_{n}\\
 & = \sum_{i=k_{n-1}-1}^{k_{n}-2}d_{\pi(i)} + \sigma_{n}-\sigma_{n-1}\\
 & = \sum_{i=k_{n-1}-1}^{k_{n}-2}d_{\pi(i)} + \left(n-\sum_{i=1}^{k_{n}-2}d_{\pi(i)}\right)-\left(n-1-\sum_{i=1}^{k_{n-1}-2}d_{\pi(i)}\right) = 1.
\end{align*}
A similar calculation yields $\norm{v_{1}}=1$. This means that rows of our infinite matrix have each norm $1$. Moreover, they are mutually orthogonal since any two vectors $v_n$ and $v_m$ have disjoint supports unless they are consecutive: $v_n$ and $v_{n+1}$. However, in the latter case the orthogonality is a consequence of \eqref{orth}.

Define the orthogonal projection $P$ by
\[Pv = \sum_{n\in\N} \langle v,v_{n}\rangle v_{n}, \qquad v\in \mathcal H.\]
It is easy to check that the $i$th column of our infinite matrix has norm equal to $\sqrt{d_{\pi(i)}}$. In other words, for each $i\in\N$ we have 
\[
\langle Pe_{i},e_{i}\rangle =||Pe_i||^2=\sum_{n\in \N} |\langle e_i,v_n\rangle|^2= d_{\pi(i)}.
\]
This completes the proof of Theorem \ref{cptalg}.
\end{proof}

We are now ready to prove Carpenter's Theorem under assumption (ii).

\begin{thm}\label{abinf} If $\{d_{i}\}_{i\in I}$ is a sequence in $[0,1]$ such that
\begin{equation}\label{abinf1}
a = \sum_{d_{i}<1/2}d_{i} = \infty\qquad\text{or}\qquad b =\sum_{d_{i}\geq 1/2}(1-d_{i}) = \infty,
\end{equation}
then there is a projection $P$ with diagonal $\{d_{i}\}$.
\end{thm}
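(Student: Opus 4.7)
The plan is to reduce Theorem \ref{abinf} to countably many applications of Theorem \ref{cptalg}. Since $P$ is a projection with diagonal $\{d_i\}$ if and only if $\mathbf I - P$ is a projection with diagonal $\{1-d_i\}$, and the involution $d_i \mapsto 1-d_i$ exchanges $a$ and $b$, I may assume without loss of generality that $a = \infty$. I may also separate off the indices where $d_i \in \{0,1\}$, which contribute trivial direct summands $\mathbf 0$ and $\mathbf I$ to the final projection, and so reduce to the case $d_i \in (0,1)$ for every $i \in I$.

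Set $L = \{i : d_i \geq 1/2\}$ and $S = \{i : d_i < 1/2\}$, so that $\sum_{i \in S} d_i = a = \infty$. Using the standard trick of grouping $S$ into consecutive finite blocks each of sum at least $1$ and distributing these blocks round-robin among a chosen collection of bins, I partition $S$ into a family $\{S_k\}_{k \in K}$ with $\sum_{i \in S_k} d_i = \infty$ for every $k \in K$, choosing $|K|$ large enough that $|K| \geq |L|$ and $K$ is nonempty in every case. After enumerating $L$, I pair each $\ell \in L$ with a distinct bin $S_{k(\ell)}$ to form $T_{k(\ell)} = \{\ell\} \cup S_{k(\ell)}$, and leave the remaining bins unchanged as $T_k = S_k$. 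By construction the $T_k$'s partition $I$, and for each $k$ the subsequence $\{d_i\}_{i \in T_k}$ has at most one term (a distinguished $d_\ell \in [1/2,1)$) outside $[0,1/2]$, with all other terms in $[0,1/2)$ and with infinite sum.

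Each subsequence $\{d_i\}_{i \in T_k}$ then satisfies the hypotheses of Theorem \ref{cptalg}, which produces a projection $P_k$ on an appropriate Hilbert space with diagonal $\{d_i\}_{i \in T_k}$. The orthogonal direct sum $\bigoplus_k P_k$, together with the trivial summands from the initial reduction, is a projection with diagonal $\{d_i\}_{i \in I}$, proving the theorem.

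The real content of the argument is Theorem \ref{cptalg}; the only point requiring attention here is the partition in the second step, and in particular the edge cases. When $L$ is empty, Theorem \ref{cptalg} applies directly to $\{d_i\}_{i \in S}$. When $L$ is finite and nonempty, one takes $|K| = |L| + 1$ so that one bin remains as an all-small piece. When $L$ is countably infinite, one takes $K = \N$ and pairs bijectively with $L$. In every case the bookkeeping leaves each $T_k$ with exactly the shape required by Theorem \ref{cptalg}.
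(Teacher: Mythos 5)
Your strategy is the same as the paper's: reduce via the involution $d_i \mapsto 1-d_i$, split the index set into countably many pieces each satisfying the hypotheses of Theorem~\ref{cptalg}, and take a direct sum. However, your opening reduction has a gap: the involution $d_i \mapsto 1-d_i$ does not exchange $a$ and $b$. Writing $\tilde d_i = 1-d_i$, one finds
\[
\tilde a \;=\; \sum_{\tilde d_i < 1/2}\tilde d_i \;=\; \sum_{d_i > 1/2}(1-d_i),
\]
which differs from $b = \sum_{d_i \geq 1/2}(1-d_i)$ by $\tfrac{1}{2}\,\bigl|\{i : d_i = 1/2\}\bigr|$. When infinitely many $d_i$ equal $1/2$, the condition $b=\infty$ does not force $\tilde a = \infty$; the extreme example $d_i \equiv 1/2$ has $a = 0$ and $b = \infty$, yet $\tilde d_i = d_i$, so the involution is a fixed point and grants nothing. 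In that situation your set $S = \{i : d_i < 1/2\}$ is empty, $\sum_{i\in S}d_i = 0 \neq \infty$, and the block-partition of $S$ cannot even begin.

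The repair is to place the cutoff asymmetrically, as the paper does: take $L = \{i : d_i > 1/2\}$ and $S = \{i : d_i \leq 1/2\}$ (the $1/2$'s belong with the small indices, which is exactly the range $[0,1/2]$ allowed for the generic entries in Theorem~\ref{cptalg}), and argue with $a' = \sum_{i\in S} d_i \geq a$ in place of $a$. The hypothesis ``$a=\infty$ or $b=\infty$'' certainly implies ``$a'=\infty$ or $b=\infty$,'' and now the involution carries $\{i : d_i \geq 1/2\}$ onto $\{i : \tilde d_i \leq 1/2\}$ exactly, so that $b$ becomes precisely $a'$ for the reflected sequence, making the WLOG valid. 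With that modification the rest of your argument---the round-robin splitting of $S$ into countably many bins with infinite sum and the assignment of one $\ell\in L$ to each bin---is correct and coincides with the paper's Case~1/Case~2 proof.
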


\begin{proof} Set 
\[I_{0} = \{i : d_{i} \leq 1/2\}\quad \text{and}\quad I_{1} = \{i :d_{i} > 1/2\}.\]
Our hypothesis \eqref{abinf1} implies that
\begin{equation}\label{abinf2}
a' = \sum_{i\in I_{0}}d_{i} = \infty
\qquad\text{or}\qquad
b=\infty.
\end{equation}

{\bf{Case 1.}} Assume that $a'=\infty$. We can partition $I$ into countably many sets $\{J_{n}\}_{n\in \N}$ such that each $J_n$ contains at most one element in $I_1$ and
\[
\sum_{i\in J_{n}} d_{i} = \infty \qquad\text{for all }
n\in \N.\]
This is possible since $I_0$ satisfies \eqref{abinf2}.
By Theorem \ref{cptalg}, for each $n\in \N$ there is a projection $P_{n}$ with diagonal $\{d_{i}\}_{i\in J_n}$. Thus, the projection
\[P = \bigoplus_{n\in \N}P_{n}\]
has the desired diagonal $\{d_i\}_{i\in I}$. This completes the proof of Case 1.

{\bf{Case 2.}} Assume that $b=\infty$. Note that
\[b = \sum_{1-d_{i}\leq 1/2}(1-d_{i}).
\]
Thus, by Case 1 there is a projection $P'$ with diagonal $\{1-d_{i}\}$. Hence, $P=\mathbf I-P'$ is a projection with diagonal $\{d_{i}\}$.
\end{proof}

\section{A selector problem}

Kadison's Theorem \ref{Kadison} is closely connected with an open problem of characterizing all spectral functions of shift-invariant spaces. 
Shift-invariant (SI) spaces are closed subspaces of $L^2(\R^d)$ that are
invariant under all shifts, i.e., integer translations. That is, a closed subspace $V \subset  L^2(\R^d)$ is SI if $T_k(V)=V$ for all $k\in\Z^d$,
where $T_kf(x)=f(x-k)$ is the translation operator.
The theory of
shift-invariant spaces plays an important role in many
areas, most notably in the theory of wavelets,  spline systems, Gabor systems,
and approximation theory \cite{BDR1, BDR2, Bo, RS1, RS2}.
The study of analogous spaces for $L^2(\T, \mathcal H)$ with values in a
separable Hilbert space $\mathcal H$ in terms of the range function, often called
doubly-invariant spaces, is quite classical and goes back to Helson \cite{He}.

In the context of SI spaces a {\it range function} is any mapping
$$J: \T^d \to \{\text{closed subspaces of }\ell^2(\Z^d)\},$$
where $\T^d=\R^d/\Z^d$ is
identified with its fundamental domain $[-1/2,1/2)^d$. We say that $J$ is {\it
measurable} if the associated orthogonal projections $P_J(\xi): \ell^2(\Z^d) \to
J(\xi)$ are operator measurable, i.e., $\xi \mapsto P_J(\xi) v$ is measurable
for any $v\in \ell^2(\Z^d)$. We follow the convention which identifies range functions if they are equal a.e. 
A fundamental result due to Helson \cite[Theorem 8, p.~59]{He} gives one-to-one correspondence between SI spaces $V$ and measurable range functions $J$, see also \cite[Proposition 1.5]{Bo}.
Among several equivalent ways of introducing the spectral function of a SI space the most relevant definition uses a range function.

\begin{defn}
The {\it spectral function} of a SI space $V$ is a measurable mapping 
$\sigma_V: \R^d \to [0,1]$ given by
\begin{equation}\label{dsp}
\sigma_V(\xi+k) = ||P_J(\xi) e_k||^2=\langle P_J(\xi)e_k,e_k \rangle \qquad\text{for } \xi\in \T^d,\ k\in\Z^d,
\end{equation}
where $\{e_k\}_{k\in\Z^d}$ denotes the standard basis of $\ell^2(\Z^d)$ and $\T^d=[-1/2,1/2)^d$. In other words, $\{\sigma_V(\xi+k)\}_{k\in \Z^d}$ is a diagonal of a projection $P_J(\xi)$.
\end{defn}

Note that $\sigma_V(\xi)$ is well defined for a.e.~$\xi\in\R^d$,
since $\{ k+ \T^d: k\in \Z^d\}$
is a partition of $\R^d$. As an immediate consequence of Theorem \ref{Kadison} we have the following result.

\begin{thm}\label{sp}
Suppose that $V \subset L^2(\R^d)$ is a SI space. Let $\sigma=\sigma_V:\R^d \to [0,1]$ be its spectral function. For $\xi\in \T^d$ define
\[
a(\xi)=\sum_{k\in \Z^d, \ \sigma(\xi+k)<1/2} \sigma(\xi+k) \quad\text{and}\quad 
b(\xi)=\sum_{k\in \Z^d, \ \sigma(\xi+k) \ge1/2}(1-\sigma(\xi+k)).
\]
Then, for a.e. $\xi \in \R^d$ we either have
\begin{enumerate}
\item $a(\xi),b(\xi)<\infty$ and  $a(\xi)-b(\xi)\in\Z$, or
\item $a(\xi)=\infty$ or $b(\xi)=\infty$.
\end{enumerate}
\end{thm}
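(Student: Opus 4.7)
The plan is to observe that Theorem \ref{sp} is, in essence, a pointwise application of the necessity direction of Kadison's Theorem \ref{Kadison} to the diagonal sequences that arise from the range function of $V$. No new analytic machinery is needed; the work is bookkeeping between the object $P_J(\xi)$ and the scalar sequence $\{\sigma(\xi+k)\}_{k\in\Z^d}$.

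First I would fix a measurable range function $J$ associated to $V$ via Helson's correspondence, so that $V$ is determined by the orthogonal projections $P_J(\xi)\colon \ell^2(\Z^d)\to J(\xi)$. By the definition in \eqref{dsp}, for every $\xi\in \T^d$ and every $k\in \Z^d$ we have
\[
\sigma(\xi+k)=\langle P_J(\xi)e_k,e_k\rangle,
\]
so that the sequence $\{\sigma(\xi+k)\}_{k\in\Z^d}$ is literally the diagonal of the orthogonal projection $P_J(\xi)$ expressed in the standard orthonormal basis $\{e_k\}_{k\in \Z^d}$ of $\ell^2(\Z^d)$. In particular, each term lies in $[0,1]$, as required by the hypotheses of Theorem \ref{Kadison}.

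Next, for each such $\xi$ I would apply the necessity direction of Theorem \ref{Kadison} to the sequence $\{\sigma(\xi+k)\}_{k\in\Z^d}$, with index set $I=\Z^d$. The conclusion is precisely that one of the two alternatives holds: either $a(\xi),b(\xi)<\infty$ with $a(\xi)-b(\xi)\in\Z$, or else at least one of $a(\xi),b(\xi)$ is infinite. Since $\{k+\T^d:k\in\Z^d\}$ partitions $\R^d$, specifying the alternative for each $\xi\in\T^d$ determines it for a.e.~$\xi\in\R^d$.

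The only mildly subtle point, and the one I would be careful to spell out, is the quantifier ``for a.e.\ $\xi$.'' The range function $J$, and hence the projection $P_J(\xi)$, is only defined modulo null sets; moreover $\sigma_V$ itself is an a.e.-defined object. So what one really does is choose a pointwise everywhere-defined representative of $J$ (hence of $P_J(\xi)$) on a conull set $E\subset \T^d$, observe that on $E$ the sequence $\{\sigma(\xi+k)\}_k$ is genuinely the diagonal of a genuine orthogonal projection, and apply Kadison's theorem on $E$. That is the only place where measure-theoretic care is needed, but it is routine since no uniformity in $\xi$ is claimed; the dichotomy is purely pointwise. This completes the proof plan, with no substantial obstacle beyond correctly invoking Theorem \ref{Kadison} pointwise.
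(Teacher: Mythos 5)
Your argument is exactly what the paper intends: the paper states Theorem \ref{sp} as ``an immediate consequence of Theorem \ref{Kadison}'' without a separate proof, and the pointwise application of the necessity direction of Kadison's theorem to the diagonal of $P_J(\xi)$ is precisely that consequence. Your extra remark about choosing an everywhere-defined representative of the range function on a conull set is a reasonable and correct way to justify the ``a.e.''\ quantifier.
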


It is an open problem whether the converse to Theorem \ref{sp} holds. 

\begin{problem}
Suppose that a measurable function $\sigma:\R^d \to [0,1]$ satisfies either (i) or (ii) for a.e. $\xi \in \R^d$. Does there exists a SI space $V\subset L^2(\R^d)$ such that its spectral function $\sigma_V=\sigma$?
\end{problem}

The sufficiency part of Theorem \ref{Kadison}, i.e., Carpenter's Theorem, suggests a positive answer to this problem. That is, for a.e. $\xi$ it yields a projection $P_J(\xi)$ whose diagonal satisfies \eqref{dsp}. However, it does not guarantee a priori that the corresponding range function $J$ is measurable. This naturally leads to the following selector problem.

\begin{problem}
Let $X$ be a finite (or $\sigma$-finite) measure space and let $I$ be a countable index set. Let $\sigma:X \times I \to [0,1]$ be a measurable function. For $\xi\in X$ define
\[
a(\xi)=\sum_{i\in I, \ \sigma(\xi,i)<1/2} \sigma(\xi,i) \quad\text{and}\quad 
b(\xi)=\sum_{i\in I, \ \sigma(\xi,i)\ge 1/2} (1-\sigma(\xi,i)).
\]
Suppose that for a.e. $\xi \in X$ we either have
\begin{enumerate}
\item $a(\xi),b(\xi)<\infty$ and  $a(\xi)-b(\xi)\in\Z$, or
\item $a(\xi)=\infty$ or $b(\xi)=\infty$.
\end{enumerate}
Does there exists a measurable range function 
$J:X \to \{\text{closed subspaces of }\ell^2(I)\}$ such that the corresponding orthogonal projections $P_J(\xi)$ have diagonal $\{\sigma(\xi,i)\}_{i\in I}$ for a.e. $\xi \in X$?
\end{problem}

In other words, Problem 2 asks whether it is possible to find a measurable selector of projections in Theorem \ref{Kadison}. The constructive proof of Carpenter's Theorem given in this paper might be a first step toward resolving this problem. However, Problem 2 remains open.

\end{document}